\setlist[enumerate]{leftmargin=*,align=left,labelindent=\parindent}
\newcommand{\Bin}{\left\{ 0,1 \right\}}
\newcommand{\Pow}[1]{\mathcal{P}(#1)}
\DeclareMathOperator{\amp}{\,\&\,}
\DeclareMathOperator{\imp}{\,\rightarrow\,}
\DeclareMathOperator{\defeqiv}{\stackrel{\textup{def}}{\iff}}
\DeclareMathOperator{\defeql}{\stackrel{\textup{def}}{\  =\  }}
\newcommand{\dotminus}{\mathbin{\ooalign{\hss\raise.5ex\hbox{$\cdot$}\hss\crcr$-$}}}
\DeclareMathOperator{\rlz}{\Vdash}
\DeclareMathOperator{\cov}{\mathbin{\lhd}}
\DeclareMathOperator{\bcov}{\mathrel{\blacktriangleleft}}
\newcommand{\ext}[1]{#1_{\preccurlyeq}}
\newcommand{\Nat}{\mathbb{N}}
\newcommand{\FSeq}{\Nat^{*}}
\newcommand{\BSeq}{\Bin^{*}}
\newcommand{\FBaire}{\mathcal{B}}
\newcommand{\FNat}{\mathcal{N}}
\newcommand{\PBaire}{\Nat^{\Nat}}
\newcommand{\PCantor}{\Bin^{\Nat}}
\newcommand{\nbf}{K}
\newcommand{\nil}{\langle\, \rangle}
\newcommand{\seq}[1]{\langle#1\rangle}
\newcommand{\prefix}{\preccurlyeq}
\newcommand{\spfix}{\prec}
\newcommand{\iseg}[2]{\overline{#1}#2}
\newcommand{\lh}[1]{\lvert #1 \rvert}
\newcommand{\curry}[2]{\lambda#1.#2}
\newcommand{\ibar}[1]{\mathrm{Bar}(#1)}
\newcommand{\CC}{\mathrm{\textup{AC}_{\omega}}}
\newcommand{\BO}{\mathit{BO}}
\newcommand{\Pt}[1]{\mathrm{Pt}(#1)}
\newcommand{\Cov}{\mathrm{Cov}}
\newcommand{\PiFT}{\textrm{\textup{$\Pi^{0}_{1}$-FAN}}}
\newcommand{\scFT}{\textrm{\textup{sc-FAN}}}
\newcommand{\cBI}{{\textrm{\textup{c-BI}}}}
\newcommand{\UCb}{{\textrm{\ensuremath{\textup{UC}_{\mathbf{B}}}}}}
\newcommand{\SC}{{\textrm{\textup{SC}}}}
\newcommand{\FC}{{\textrm{\textup{FC}}}}
\newcommand{\zero}{0^{\omega}}
\newcommand{\CZF}{{\textrm{\textup{CZF}}}}
\newtheorem{theorem}{Theorem}[section]
\newtheorem{proposition}[theorem]{Proposition}
\newtheorem{lemma}[theorem]{Lemma}
\newtheorem{corollary}[theorem]{Corollary}
\theoremstyle{definition}
\newtheorem{definition}[theorem]{Definition}
\theoremstyle{remark}
\newtheorem{remark}[theorem]{Remark}
\numberwithin{equation}{section}
\title{Formally continuous functions on Baire space}
\author{
  Tatsuji Kawai\\[0.5em]
\normalsize Dipartimento di Matematica, Universit\`{a} di Padova\\
\normalsize via Trieste 63, 35121 Padova, Italy\\
\normalsize\texttt{tatsuji.kawai@math.unipd.it}}
\date{}
\begin{document}
\maketitle
\begin{abstract}
A function from Baire space $\PBaire$ to the natural numbers
$\Nat$ is called formally continuous if it is induced by a morphism
between the corresponding formal spaces. We compare
formal continuity to two other notions of continuity on Baire space
working in Bishop constructive mathematics: one is a function induced
by a Brouwer-operation (i.e.\ inductively defined neighbourhood
function); the other is a function uniformly continuous near every
compact image. We show that formal continuity is equivalent to the
former while it is strictly stronger than the latter.
\medskip

\noindent\textsl{Keywords:} Constructive mathematics, Formal space, Baire space,
Brouwer-operation \\[3pt]
\noindent\textsl{MSC2010:} 03F60; 03F55; 06D22
\end{abstract}

\section{Introduction}\label{sec:Introduction}
In the development of Bishop  constructive mathematics \cite{Bishop-67},
pointwise continuous functions on a
compact metric space need not be uniformly continuous.  Thus, we need
to adopt a stronger notion of continuity in order to avoid Fan Theorem
to which a recursive counter example is known~\cite[Chapter 4, Section
7.6]{ConstMathI}.
In particular, Bishop defined a function on a locally compact metric
space to be \emph{continuous} if it is uniformly continuous on every
compact subset.\footnote{We refer to Bishop \cite[Chapter 4]{Bishop-67}
for terminology for metric spaces. In particular, \emph{compact} means complete and totally bounded.}

This notion works well as long as locally compact metric spaces are
concerned. Moreover, it was shown by Palmgren \cite{PalmgrenLocalicCompletion}
that continuous maps between locally compact metric spaces
are equivalent to morphisms between the corresponding formal
spaces (i.e.\ constructive point-free topologies \cite{Sambin:intuitionistic_formal_space}).
Specifically, there exists a bijective correspondence between
continuous maps between locally compact metric spaces and morphisms
between their localic completions, the latter being a particularly
well-behaved construction of point-free topologies from metric spaces
due to Vickers \cite{LocalicCompletionGenMet}. Then, a function $f \colon X
\to Y$ between complete metric spaces is called \emph{formally
continuous} if it is induced by a morphism between the localic
completions of $X$ and $Y$; see Palmgren \cite[Section 2]{PalmgrenFormalContUnifContNCI}.
Palmgren's result \cite{PalmgrenLocalicCompletion} says that Bishop's continuity and formal continuity
are equivalent for locally compact metric spaces.
 
Subsequently, Palmgren \cite{PalmgrenFormalContUnifContNCI} studied the
relation between formal continuity and continuity in Bishop
constructive analysis in a wider context of complete metric spaces.
In this  context, the following notion of continuity is often used in Bishop
constructive analysis~\cite{BridgesConstFunctAnalysis}.\footnote{The
notion of strongly continuous function is attributed to Bishop by
Bridges \cite{BridgesFunctionSpace}.
Bridges \cite{BridgesConstFunctAnalysis} calls these functions
\emph{continuous} maps; however, in order to avoid a possible
confusion, we adopt the terminology from Troelstra and {van Dalen} \cite[Chapter
7]{ConstMathII}.}  Note that this extends the notion of 
continuity on locally compact metric spaces; see e.g.\
Palmgren \cite[Proposition 1.4]{PalmgrenFormalContUnifContNCI}.
\begin{definition}\label{def:StrongContinuous}
A subset $L$ of a metric space $X$ is a \emph{compact image} if there
exist a compact metric space $K$ and a uniformly continuous function λ$f
\colon K \to X$ with $L = f[K]$. A function $f \colon  X \to Y$
between metric spaces is \emph{strongly continuous} (or uniformly
continuous near every compact image) if for each compact image $L
\subseteq⊆ X$ and each $\varepsilonε> 0$, there is $\delta > 0$ such
that for all $x,u \in X$
\[
  x \in L \amp d_{X}(x,u) < \delta \implies d_{Y}(f(x),f(u)) <
  \varepsilon.
\]
\end{definition}
Palmgren \cite[Theorem 2.3]{PalmgrenFormalContUnifContNCI}
showed that for complete metric spaces, formal continuity implies
strong continuity. He conjectured that strong continuity would be
strictly weaker than formal continuity; in particular, there would be
a model of Bishop constructive analysis in which there is  a strongly
continuous function on Baire space $\PBaire$ which is not formally
continuous. Note that Baire space with its product metric is a
paradigmatic example of a complete metric space which is not locally
compact. 

The aim of this paper is twofold. The first is to answer Palmgren's
conjecture.
This is done by comparing the strength the following two statements:
\begin{enumerate}
  \item Every pointwise continuous function
    $F \colon \PBaire \to \Nat$ is strongly continuous.
  \item Every pointwise continuous function
    $F \colon \PBaire \to \Nat$ is formally continuous.
\end{enumerate}
We show that the first statement follows from Brouwer's Fan Theorem
while the second statement implies decidable Bar Induction. There are
already several models of constructive analysis in which Fan Theorem
holds but decidable Bar Induction fails; see Fourman and Hyland \cite{FourmanHyland}.
In one of such models, we find a strongly continuous function
which is not formally continuous;
see Section \ref{sec:StrongContinuity}.

The second aim is to characterise formally continuous
functions between Baire space and natural numbers.
We show that formally continuous functions are equivalent to functions
induced by Brouwer-operations, the notion which is familiar in intuitionistic
mathematics; see Section \ref{sec:BrouwerContinuity}.
Our result suggests that Brouwer-operations may provide a good notion
of continuity on Baire space in Bishop constructive analysis.

Section \ref{sec:StrongContinuity} and Section
\ref{sec:BrouwerContinuity} are independent and can be read in any
order. Section \ref{sec:FormalContinuity} is a preliminary on formal
Baire space and formally continuous functions.

\subsubsection*{Formal system}
We work in Aczel's constructive set theory $\CZF$, which may serve as
a formal system for Bishop's constructive mathematics. Our basic
reference for $\CZF$ is the note by Aczel and Rathjen \cite{Aczel-Rathjen-Note}.
In Section \ref{sec:FormalContinuity} and Section \ref{sec:StrongContinuity}, we work in $\CZF$ extended with the Regular
Extension Axiom. This axiom is more than sufficient for the definition
of formal Baire space; see Aczel \cite[Section 6]{AspectofTopinCZF20063}.
In Section \ref{sec:BrouwerContinuity}, we work in $\CZF$
extended with Countable Choice ($\CC$) and the axiom asserting
that ``The Brouwer ordinals form a set''. These axioms are discussed in
detail by van~den Berg and Moerdijk \cite[Appendix B]{vandenBerg20121367}.
Here, the class $\BO$ of \emph{Brouwer ordinals} is the smallest class
that is closed under the following clauses:
\begin{enumerate}[({BO}1)]
  \item $* \in \BO$,
  \item $t \in \BO^{\Nat} \implies \sup_{n \in \Nat}(t_n) \in \BO$.
\end{enumerate}
 van~den Berg and Moerdijk \cite{vandenBerg20121367} showed that
the system
$\CZF + \CC +
$``The Brouwer ordinals form a set'' allows us to define formal Baire
space; see also Remark \ref{rem:FBDefined}. Hence, this system seems
to be a minimum setting in which the results of Section
\ref{sec:BrouwerContinuity} can be formalised.

\subsubsection*{Notation}
We adopt the following notation.
The set of finite sequences of natural numbers is denoted by
$\FSeq$, and the set of finite binary sequences is denoted by
$\BSeq$. The letters $k,n,m,N,M$ range over natural numbers $\Nat$,
and $a,b$ range over $\FSeq$.
Greek letters $\alpha,\beta,\gamma,\dots$ range over the
sequences $\PBaire$. The symbol $\zero$ denotes the constant sequence of $0$.

An element of $\FSeq$ of length $n$ is denoted by
$\seq{a_0,\dots,a_{n-1}}$, and the empty sequence is denoted by
$\nil$. The length of $a$ is denoted by $|a|$, and the concatenation
of $a$ and $b$ is denoted by $a * b$.  The concatenation of finite
sequence $a$ followed by a sequence $\alpha$ is denoted by  $a *
\alpha$ so that $\left( \forall n \in \Nat\right) n \geq |a| \imp a *
\alpha(n) = \alpha(n \dotminus |a|)$.  The initial segment of $\alpha$
of length $n$ is denoted by $\overline{\alpha}n$.  Sometimes, we
identify a finite sequence $a$ with a basic open subset of
Baire space with the product topology.  In this case, $\alpha
\in a$ means $\overline{\alpha}|a| = a$.  The relation $a \prefix b$
(or $a \prec b$) means that $a$ is an initial segment (respectively
strict initial segment) of $b$. We often use lambda notation to denote
functions, for example $\zero = \lambda n. 0$.
 
\section{Formally continuous functions on Baire space}\label{sec:FormalContinuity}
We recall the notion of formal Baire space from Fourman and Grayson
\cite[Example 2.6 (2)]{FormalSpace}, and that of formally continuous function from
Palmgren \cite[Section 2]{PalmgrenFormalContUnifContNCI}.
We use the predicative notion of formal space, i.e.\ formal
topology by Sambin \cite{Sambin:intuitionistic_formal_space}.
Our reference for formal topology is Fox \cite{Fox05}.

\begin{definition}\label{def:FormalBaire}
\emph{Formal Baire space} $\FBaire$ is a pair
$\left(\FSeq, \cov_{\FBaire}\right)$, where 
$\cov_{\FBaire} \subseteq \FSeq \times \Pow{\FSeq}$ 
is a relation between $\FSeq$ and the subsets of $\FSeq$
 inductively defined by the following three clauses:
\begin{gather*}
  \frac{a  \in U}{a \cov_{\FBaire} U}\,(\eta) \qquad
  \frac{a \cov_{\FBaire}U}{a * \seq{k} \cov_{\FBaire}
U}\,(\zeta) \qquad
  \frac{\left( \forall n \in \Nat \right) a*\seq{n} \cov_{\FBaire}
   U}{a \cov_{\FBaire} U}\,(\digamma)
\end{gather*}
Note that $a \cov_{\FBaire} U$ if and only if there exists a ``canonical
proof'' of the fact that ``$U$ bars $a$''; see Brouwer
\cite[Section 2]{BrouwerDomainsofFunctions}
or Troelstra and {van Dalen} \cite[Chapter 4, Section 8.18]{ConstMathI}.
\end{definition}

It is well known that $\zeta$-inference can be eliminated in the
following sense; see Troelstra and {van Dalen} \cite[Chapter 4, Exercise 4.8.10]{ConstMathI}.
\begin{lemma} \label{lem:ElimZeta}
  Let $\bcov_{\FBaire}$ be the relation between $\FSeq$ and $\Pow{\FSeq}$
  inductively defined by $\eta$ and $\digamma$-inferences in
  Definition \ref{def:FormalBaire}. Then, 
  \[
    a \cov_{\FBaire} U \iff a \bcov_{\FBaire} \ext{U}
  \]
  for all $a \in \FSeq$ and $U \subseteq \FSeq$, where
  $\ext{U}$ is the closure of $U$ under extension:
  \[
    \ext{U} \defeql \left\{ a \in \FSeq \mid \left(
    \exists b \in U \right) b \preccurlyeq a \right\}.
  \]
\end{lemma}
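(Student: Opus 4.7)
The plan is to prove the two directions by induction on the respective derivations. For the forward direction ($\Rightarrow$), I would induct on the derivation of $a \cov_{\FBaire} U$. The $\eta$ and $\digamma$ cases are immediate: in the base case $a \in U \subseteq \ext{U}$, and in the $\digamma$ case the inductive hypothesis gives $a*\seq{n} \bcov_{\FBaire} \ext{U}$ for each $n$, so $\digamma$ applied within $\bcov_{\FBaire}$ yields $a \bcov_{\FBaire} \ext{U}$. The real work is the $\zeta$ case, where I have $a \cov_{\FBaire} U$ coming from $a*\seq{k} \cov_{\FBaire} U$ and must show $a*\seq{k} \bcov_{\FBaire} \ext{U}$ from $a \bcov_{\FBaire} \ext{U}$.

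To handle the $\zeta$ case I would establish, as a separate auxiliary lemma, an extension-monotonicity property of $\bcov_{\FBaire}$: if $V \subseteq \FSeq$ is closed under extension and $a \bcov_{\FBaire} V$, then $b \bcov_{\FBaire} V$ for every $b$ with $a \prefix b$. This is proved by induction on $a \bcov_{\FBaire} V$. In the $\eta$ base case, closure of $V$ under extension gives $b \in V$ directly. In the $\digamma$ case, if $b = a$ there is nothing to do, and otherwise $b = a * \seq{n} * c$ for some $n$ and some $c$, so $a * \seq{n} \prefix b$ and the inductive hypothesis applied to the child derivation at $a * \seq{n}$ gives $b \bcov_{\FBaire} V$. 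Since $\ext{U}$ is manifestly closed under extension, applying this lemma with $V = \ext{U}$ and $b = a * \seq{k}$ discharges the $\zeta$ case.

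For the backward direction ($\Leftarrow$), I would induct on the derivation of $a \bcov_{\FBaire} \ext{U}$. In the $\eta$ case, $a \in \ext{U}$ means $a = b * c$ for some $b \in U$ and some $c \in \FSeq$; then $b \cov_{\FBaire} U$ by $\eta$, and $\lh{c}$ applications of $\zeta$ produce $a = b * c \cov_{\FBaire} U$. The $\digamma$ case transports through verbatim, since $\digamma$ is a rule of $\cov_{\FBaire}$ as well.

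The main obstacle is formulating the extension-monotonicity lemma with a strong enough conclusion so that the induction on $\bcov_{\FBaire}$ goes through in the $\digamma$ step; naively trying to prove only ``$a \bcov_{\FBaire} \ext{U} \Rightarrow a * \seq{k} \bcov_{\FBaire} \ext{U}$'' directly by induction fails because the $\digamma$ step needs to extend by an arbitrary suffix, not just one symbol. Stating the lemma for all extensions $b$ of $a$ at once is what makes the induction work.
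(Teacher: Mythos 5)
Your proof is correct and fleshes out exactly the inductions the paper's one-line proof (``by induction on $\cov_{\FBaire}$ and $\bcov_{\FBaire}$'') is gesturing at, with the extension-monotonicity lemma being the natural way to handle the $\zeta$-case. One small quibble: your closing remark that the naive statement ``$a \bcov_{\FBaire} \ext{U} \Rightarrow a*\seq{k} \bcov_{\FBaire} \ext{U}$'' cannot be proved directly by induction on $\bcov_{\FBaire}$ is slightly overstated — with the usual strong form of rule induction (where in the $\digamma$-step one may assume the premises $a*\seq{n} \bcov_{\FBaire} \ext{U}$ themselves, not just the induction hypothesis applied to them) it goes through immediately by taking $n=k$; your generalization to arbitrary extensions is what is needed if one only uses the weak form, and it is the cleaner route in any case. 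Also, in the $\zeta$-case description you wrote that $a \cov_{\FBaire} U$ comes from $a*\seq{k} \cov_{\FBaire} U$, which inverts the direction of the rule, but the obligation you then state is the correct one, so this is just a slip of wording.
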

\begin{proof}
  By induction on $\cov_{\FBaire}$ and $\bcov_{\FBaire}$.
\end{proof}

\begin{definition}
  A \emph{formal point} of $\FBaire$ is a subset $\alpha \subseteq
  \FSeq$ such that
  \begin{enumerate}
    \item\label{PtInhabited} $\left( \exists a \in \FSeq \right) a \in \alpha$;
    \item\label{PtConvergent} $a,b \in \alpha \implies a \prefix b \vee b \prefix a$;
    \item $a \prefix b \in \alpha \implies a \in \alpha$;
    \item $a  \in \alpha \implies \left( \exists n \in \Nat \right)a *
      \langle n \rangle \in \alpha$.
  \end{enumerate}
  The set of formal points of $\FBaire$ is denoted by $\Pt{\FBaire}$.
\end{definition}

By induction on $\cov_{\FBaire}$, one can show  that a subset $\alpha
\subseteq \FSeq$ is a formal point if and only if
$\alpha$ satisfies \ref{PtInhabited} and \ref{PtConvergent} above, and
for each $a \in \FSeq$ and $U \subseteq \FSeq$
\begin{equation}\label{eq:PtSplit}
  a \cov_{\FBaire} U \amp a \in \alpha \implies \left( \exists b \in U
  \right) b \in \alpha.
\end{equation}
Note that we can identify a formal point $\alpha \in \Pt{\FBaire}$
with a sequence $p_{\alpha} \in \PBaire$ defined by
\begin{equation}\label{eq:Falpha}
  p_{\alpha}(n) \defeql a_{\alpha}(n)
\end{equation}
where  $a_{\alpha}$ is a unique $a_{\alpha} \in \alpha$ such that $|a_{\alpha}| = n + 1$.

\begin{definition}
\emph{Formal natural numbers} $\FNat$ is a pair
$\left(\Nat, \cov_{\FNat} \right)$ where 
the relation $\cov_{\FNat} \subseteq \Nat \times \Pow{\Nat}$ is the
membership $\in$.
A formal point of $\FNat$ is just a singleton subset of $\Nat$.
The set of formal points of $\FNat$ is denoted by $\Pt{\FNat}$.
\end{definition}

\begin{definition}
A \emph{formal topology map} from $\FBaire$ to $\FNat$ is a
relation $r \subseteq \FSeq \times \Nat$ such that
\begin{enumerate}
  \item  $\nil \cov_{\FBaire} r^{-}\Nat$,
  \item  $\ext{(r^{-}\left\{ n \right\})} \cap \ext{(r^{-}\left\{ m \right\})}
    \cov_{\FBaire} r^{-}\left\{ l \in \Nat \mid l = n = m \right\}$,
\end{enumerate}
where
  $
  r^{-} D \defeql \left\{ a \in \FSeq \mid \left( \exists n \in D
    \right) a \mathrel{r} n \right\}
  $
for each $D \subseteq \Nat$.
\end{definition}
By the condition \eqref{eq:PtSplit}, it is easy to see that the function $\Pt{r}
\colon \Pt{\FBaire} \to \Pt{\FNat}$ defined by
\[
  \Pt{r}(\alpha) \defeql \left\{ n \in \Nat \mid \left( \exists a \in
    \alpha \right) a \mathrel{r} n \right\}
\]
is a well-defined mapping from $\Pt{\FBaire}$ to $\Pt{\FNat}$.

\begin{definition}\label{def:FRep}
  A function $F \colon \PBaire \to \Nat$ is \emph{formally
  continuous} if there exists a formal topology map $r$
  from $\FBaire$ to $\FNat$ which makes the following
  diagram commute:
  \[
   \xymatrix{
     \PBaire \ar[r]^-{i_{\FBaire}} \ar[d]_{F} & \Pt{\FBaire}
     \ar[d]^{\Pt{r}} \\
     \Nat \ar[r]^-{i_{\FNat}} & \Pt{\FNat}
   }
  \]
  Here, $i_{\FBaire}$ and $i_{\FNat}$ are bijections defined by
  \begin{align*}
    i_{\FBaire}(\alpha) &\defeql \left\{ \iseg{\alpha}{n} \mid n \in \Nat
  \right\},\\
    i_{\FNat}(n) &\defeql \left\{ n \right\}.
  \end{align*}
\end{definition}

\begin{remark}
Palmgren \cite[Section 3]{PalmgrenFormalContUnifContNCI} showed that formal Baire space
is the localic completion of Baire space with the product metric
\begin{equation}\label{eq:BaireMet}
  d(\alpha,\beta)
  \defeql \inf \left\{ 2^{-n} \mid \overline{\alpha} n =
  \overline{\beta} n\right\}.
\end{equation}
Hence, the notion of formally continuous function given in Definition
\ref{def:FRep} is equivalent to Palmgren's corresponding notion in~\cite{PalmgrenFormalContUnifContNCI}.
\end{remark}

\section{Strongly continuous functions}\label{sec:StrongContinuity}
We show that strongly continuous functions on Baire space need not be
formally continuous.

The definition of strongly continuous function simplifies when the
domain is complete; see Troelstra and {van Dalen} \cite[Chapter 7, Section 4.8]{ConstMathI}.
\begin{lemma}
A function $f \colon  X \to Y$ from a complete metric space $X$ to a
metric space $Y$ is strongly continuous if and only if for each
compact subset $K \subseteq⊆ X$ and each $\varepsilonε> 0$, there is
$\delta > 0$ such that for all $x,u \in X$
\[
  x \in K \amp d_{X}(x,u) < \delta \implies d_{Y}(f(x),f(u)) <
  \varepsilon.
\]
\end{lemma}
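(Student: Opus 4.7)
The plan is to observe that the two formulations of strong continuity differ only in the class of subsets over which the uniform continuity bound is required, and that in a complete ambient space these two classes are essentially interchangeable for our purposes.

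For the forward direction, I would note that every compact subset $K \subseteq X$ is itself a compact image: take $K$ as the witness compact metric space and the inclusion $K \hookrightarrow X$ as the uniformly continuous map (it is $1$-Lipschitz, hence uniformly continuous, and its image is $K$). Applying the hypothesis of strong continuity to this compact image yields the stated bound immediately, so this direction is essentially free.

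The converse is the substantive step. Given a compact image $L = g[K_{0}] \subseteq X$ with $g \colon K_{0} \to X$ uniformly continuous on a compact metric space $K_{0}$, the plan is to enclose $L$ in a bona fide compact subset of $X$. I would take $K \defeql \overline{L}$, the metric closure of $L$ in $X$. Then $L$ is totally bounded, being the uniformly continuous image of the totally bounded space $K_{0}$, and total boundedness is preserved under closure, so $K$ is totally bounded; and $K$ is complete because it is the closure of a subset of the complete space $X$. Hence $K$ is a compact subset of $X$, and since $L \subseteq K$, applying the hypothesis to $K$ with the given $\varepsilon$ produces a $\delta$ that works uniformly over all $x \in L$ with $u \in X$.

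The only point where a careful reader might pause is the constructive validity of the two stability properties of total boundedness used in the converse, namely preservation under uniformly continuous images and under closure. Both are standard results of Bishop constructive analysis, provable in $\CZF$ without any additional choice or induction principles, so I do not expect any genuine obstacle here.
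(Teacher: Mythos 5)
The paper does not actually supply a proof of this lemma; it simply cites Troelstra and van Dalen for the fact that the definition of strong continuity simplifies over a complete domain. Your proof is correct and is the natural argument one would expect that reference to contain: the forward direction is free since every inhabited compact subset is a compact image via the inclusion map, and for the converse you enclose a compact image $L = g[K_0]$ in the compact subset $\overline{L}$, which is totally bounded because $L$ is (as the uniformly continuous image of a totally bounded space, and total boundedness passes to closures) and complete because it is a closed, located subset of the complete space $X$; since $L \subseteq \overline{L}$, the $\delta$ obtained for $\overline{L}$ works for $L$. The stability properties you flag at the end are indeed routine facts of Bishop-style analysis and hold in $\CZF$, so there is no gap.
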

We focus on the special case where $X$ is Baire space
$\PBaire$ and $Y$ is the discrete space of $\Nat$. In this case,
a strongly continuous function admits a simple characterisation.
To see this, we recall further terminology.

A \emph{spread} is a decidable tree $T \subseteq \FSeq$ such that
\[
  \left( \forall a \in T \right)\left( \exists n \in \Nat \right)
  T(a * \seq{n}).
\]
Here, $T(a * \seq{n})$ means $a * \seq{n} \in T$.
A \emph{fan} is a spread $T$ such that
\[
  \left( \forall a \in T \right)\left( \exists N \in \Nat \right)
  \left( \forall n \in \Nat \right)\left[T(a * \seq{n}) \implies n \leq N \right].
\]
A sequence $\alpha \in \PBaire$ is a \emph{path} in a tree
$T \subseteq \FSeq$, written $\alpha \in T$, if $\left( \forall n \in \Nat
\right) T(\overline{\alpha}n)$.

It is known that every inhabited compact subset of Baire space (with
the metric defined by \eqref{eq:BaireMet}) can be represented by the set of paths of
some fan. Thus, the following is clear.
\begin{lemma}\label{lem:StrongContBaire}\leavevmode
  A function $F \colon \PBaire \to \Nat$ is strongly continuous if and only if
  for each fan $T$, there exists $N \in \Nat$ such that
  \begin{equation*}
    \left( \forall \alpha \in T \right)
    \left( \forall \beta \in \PBaire \right)
    \overline{\alpha} N = \overline{\beta} N 
    \implies F(\alpha) = F(\beta).  
  \end{equation*}
\end{lemma}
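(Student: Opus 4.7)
My plan is to bridge strong continuity and the combinatorial statement via three elementary translations. First, for the product metric on $\PBaire$ one has $d(\alpha,\beta) < 2^{-N+1}$ iff $\iseg{\alpha}{N} = \iseg{\beta}{N}$, so the metric balls around a point are precisely the cylinders determined by its initial segments. Second, since $\Nat$ carries the discrete metric, $d_{\Nat}(F(\alpha),F(\beta)) < \varepsilon$ is equivalent to $F(\alpha) = F(\beta)$ whenever $0 < \varepsilon \leq 1$, so only that range of $\varepsilon$ is meaningful. Third, by the fact cited immediately before the lemma, the inhabited compact subsets of $\PBaire$ are precisely the path sets of fans. The proof is then obtained by threading these three equivalences through the simplified form of strong continuity for complete domains established in the previous lemma (which applies since $\PBaire$ is complete).

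For the forward implication, suppose $F$ is strongly continuous and let $T$ be a fan. Its path set $L_T \subseteq \PBaire$ is compact: it is closed in the complete space $\PBaire$, and totally bounded because $T$ is decidable and finitely branching. Apply the simplified strong continuity with $K = L_T$ and $\varepsilon = 1$ to obtain $\delta > 0$, and pick $N$ such that $2^{-N} < \delta$. Then for any path $\alpha$ in $T$ and any $\beta \in \PBaire$ with $\iseg{\alpha}{N} = \iseg{\beta}{N}$, we have $d(\alpha,\beta) \leq 2^{-N} < \delta$, whence $F(\alpha) = F(\beta)$.

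Conversely, suppose the combinatorial condition holds, and let a compact $K \subseteq \PBaire$ and $\varepsilon > 0$ be given; without loss of generality $\varepsilon \leq 1$. If $K$ is empty, any $\delta$ works; otherwise $K = L_T$ for some fan $T$, and the hypothesis provides $N$ with the uniformity property. Setting $\delta = 2^{-N+1}$, any $\alpha \in K$ and $\beta \in \PBaire$ with $d(\alpha,\beta) < \delta$ satisfy $\iseg{\alpha}{N} = \iseg{\beta}{N}$, so $F(\alpha) = F(\beta)$, hence $d_{\Nat}(F(\alpha),F(\beta)) = 0 < \varepsilon$.

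The one subtle point is verifying constructively that the path set of a fan is compact in the Bishop sense (i.e., producing the finite $2^{-N}$-net), but this is a standard consequence of the decidability and finite branching of the fan, so I expect no genuine obstacle here.
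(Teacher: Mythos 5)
Your argument is correct and is the natural filling-in of the paper's proof, which the paper itself merely declares ``clear'' after noting that inhabited compact subsets of $\PBaire$ correspond to path sets of fans. You reduce both directions to the two metric translations (cylinders versus $2^{-N+1}$-balls in $\PBaire$, and $\varepsilon \leq 1$ trivializing the discrete codomain) threaded through the preceding lemma's simplification of strong continuity for complete domains, exactly as intended, and you rightly flag the Bishop-compactness of the path set of a fan as the one non-routine step (it is the converse of the fact the paper actually cites, and extracting a finite $2^{-k}$-net from the fan's $\exists N$ bound does lean on a choice principle, a point the paper glosses over as well).
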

We study the strength of the following statement:
\begin{description}
  \item[\SC] Every pointwise continuous function
    $F \colon \PBaire \to \Nat$ is strongly continuous.
\end{description}
We restate $\SC$ in the style of Fan Theorem.

Given a spread $T$, a subset $P \subseteq \FSeq$ is a \emph{bar} of
$T$ if 
\[
  \left( \forall \alpha \in T \right)\left( \exists n \in \Nat \right)
  P(\overline{\alpha}n).
\]
Note that if $T'$ is a sub-spread of $T$ and $P$ is a bar of $T$, then $P$ is
a bar of $T'$.
A bar $P$ of a spread $T$ is \emph{uniform} if 
\[
  \left( \exists N \in \Nat \right)\left( \forall \alpha \in
  T \right)\left( \exists n \leq N \right) P(\overline{\alpha}n).
\]
A subset $P \subseteq \FSeq$ is a \emph{c-set} if there exists a
function $\delta \colon \FSeq \to \Nat$ such that
\[
  \left(\forall a \in \FSeq  \right)
  \left[P(a) \iff
    \left( \forall b \in \FSeq \right) \delta(a) = \delta(a * b)
  \right].
\]
Note that every c-set is \emph{monotone}, i.e.\ closed under extension.
A \emph{c-bar} is c-set that is a bar of the universal spread $\FSeq$.

The principle $\scFT$ is the following statement:
\begin{description}
 \item[$\scFT$] Every c-bar is uniform with respect to every fan.
\end{description}
In other words, $\scFT$ states that for every c-bar $P$ and fan $T$,
there exists $N \in \Nat$ such that
$
\left( \forall \alpha \in T \right) P(\overline{\alpha}N).
$
\begin{proposition}\label{prop:scFTequivSC}
  $\scFT \iff \SC$.
\end{proposition}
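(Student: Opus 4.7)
The plan is to prove each implication by translating between pointwise continuous functions $F \colon \PBaire \to \Nat$ and c-bars: in each direction one builds the counterpart object, verifies the required properties, and reads off the conclusion via Lemma \ref{lem:StrongContBaire}.

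For $\scFT \Rightarrow \SC$, I take a pointwise continuous $F$, set the modulus $\delta(a) := F(a * \zero)$, and associate the c-set
\[
P_F(a) \defeqiv (\forall b \in \FSeq)\, \delta(a) = \delta(a * b).
\]
Pointwise continuity of $F$ at $\alpha$ immediately gives $P_F(\overline{\alpha}N)$ for any continuity modulus $N$ at $\alpha$, since both $\overline{\alpha}N * \zero$ and $\overline{\alpha}N * b * \zero$ extend $\overline{\alpha}N$ and hence receive the same value $F(\alpha)$; so $P_F$ is a c-bar. Applying $\scFT$ to $P_F$ and a given fan $T$ yields a uniform $N$ with $P_F(\overline{\alpha}N)$ for all $\alpha \in T$; unwinding the definition, for any $\beta$ with $\overline{\beta}N = \overline{\alpha}N$ we get $F(\beta) = \delta(\overline{\beta}N) = \delta(\overline{\alpha}N) = F(\alpha)$, and Lemma \ref{lem:StrongContBaire} delivers strong continuity.

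For $\SC \Rightarrow \scFT$, I take a c-bar $P$ with modulus $\delta$ and a fan $T$, and define $F \colon \PBaire \to \Nat$ by $F(\alpha) := m_\alpha$ where $m_\alpha := \mu n.\, P(\overline{\alpha}n)$. Minimality of $m_\alpha$ together with monotonicity of $P$ shows that if $\overline{\beta}m_\alpha = \overline{\alpha}m_\alpha$ then $m_\beta = m_\alpha$, so $F$ is pointwise continuous with modulus $m_\alpha$ at $\alpha$. Applying $\SC$ produces $N$ such that $F$ is constant on $\overline{\alpha}N$ for every $\alpha \in T$; crucially the constancy clause of strong continuity applies to every $\beta \in \PBaire$ with $\overline{\beta}N = \overline{\alpha}N$, not merely $\beta \in T$. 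Hence on each of the finitely many $a \in T_N := \{\overline{\alpha}N : \alpha \in T\}$ the value $m_\beta$ takes a single value $m_0(a)$ for all $\beta$ extending $a$, so $P(\overline{\beta}m_0(a))$ holds for all such $\beta$. Setting $N' := \max\{m_0(a) : a \in T_N\}$ and using monotonicity of $P$, we get $P(\overline{\alpha}N')$ uniformly for $\alpha \in T$.

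The main obstacle is the constructive well-definedness of $m_\alpha = \mu n.\, P(\overline{\alpha}n)$: since a c-set is only $\Pi^0_1$, least-number selection is not immediately available in $\CZF + \mathrm{REA}$. I would address this by replacing $m_\alpha$ with a canonical witness $n_\alpha$ read off from a derivation of $\nil \cov_{\FBaire} P$ in the inductively defined formal cover; such a derivation is available for c-bars in the present setting by the inductive nature of $\cov_{\FBaire}$, and tracing it along $\alpha$ produces a pointwise continuous witness function satisfying $P(\overline{\alpha}n_\alpha)$, under which the argument above goes through verbatim.
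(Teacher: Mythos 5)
Your left-to-right direction is essentially the paper's argument: the same c-bar $P_F$ with $\delta(a) = F(a*\zero)$, the same appeal to $\scFT$ and to pointwise continuity to chain the equalities. Fine.

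The right-to-left direction has a real gap, and you correctly locate where it is but then propose a fix that cannot work. Defining $F(\alpha) = \mu n.\,P(\overline{\alpha}n)$ is indeed not constructively available, since $P$ is merely a $\Pi^0_1$ predicate. But your repair --- ``read off a canonical witness from a derivation of $\nil \cov_{\FBaire} P$, which is available for c-bars by the inductive nature of $\cov_{\FBaire}$'' --- assumes exactly what is not available. The statement that every c-bar $P$ satisfies $\nil \cov_{\FBaire} P$ is, by Proposition~\ref{prop:cBICanon}, precisely $\cBI$; and $\cBI$ is the paper's \emph{target} principle, strictly stronger than the ambient theory (it implies decidable Bar Induction, which fails in the Fourman--Hyland models where Fan Theorem holds). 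The inductive definition of $\cov_{\FBaire}$ only says it is the least relation closed under $\eta$, $\zeta$, $\digamma$; it does not grant a derivation for every semantic bar. You are conflating ``$P$ bars every path'' (a $\Pi^1_1$ fact, assumed of c-bars) with ``$P$ is formally covered'' (an inductively generated fact). Separating these is the whole point of the paper, and assuming their equivalence here makes the argument circular and, worse, unprovable in $\CZF + \mathrm{REA}$.

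What actually works is the paper's alternative construction of $F$: take a modulus $\delta$ for $P$ and set
\[
  F(\alpha) \defeql \max\Bigl(\bigl\{ n \in \Nat \mid \delta(\overline{\alpha}n) \neq \delta(\overline{\alpha}(n+1)) \bigr\} \cup \{1\}\Bigr).
\]
The set in question is \emph{decidable} (equality of natural numbers is decidable) and, because $P$ is a bar, it is bounded for each $\alpha$; hence its maximum exists and $F$ is a total, pointwise continuous function, with no appeal to any $\mu$-operator, to a formal-cover derivation, or to $\cBI$. From there, applying $\SC$ and taking $M \defeql \max\{N, \max\{ F(a*\zero) : a \in T,\ |a|=N \}\} + 1$ lets one deduce $\delta(\overline{\alpha}M) = \delta(\overline{\alpha}M * b)$ for all $b$, hence $P(\overline{\alpha}M)$ uniformly over $\alpha \in T$. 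The structure of your final paragraph (extract a uniform bound from strong continuity, then use monotonicity of $P$) is the right idea; you just need this well-defined $F$ in place of the ill-defined one.
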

\begin{proof}
  ($\Rightarrow$)
  Assume $\scFT$.
  Let $F \colon \PBaire \to \Nat$ be a pointwise continuous function,
  and let $T \subseteq \FSeq$ be a fan. Define a subset
  $P \subseteq \FSeq$ by
  \begin{equation}\label{eq:cBarFromFun}
    P(a) \defeqiv \left( \forall b \in \FSeq \right)
    F(a * \zero) = F(a * b * \zero).
  \end{equation}
  Since $F$ is pointwise continuous,  $P$ is a c-bar.
  By $\scFT$, there exists $N \in \Nat$ such that
    $
    \left( \forall \alpha \in T \right) P(\overline{\alpha}N),
    $
  i.e.\
  \[
    \left( \forall \alpha \in T \right) \left( \forall b \in \FSeq
    \right) F(\overline{\alpha}N * \zero) = F(\overline{\alpha}N *
    b * \zero).
  \]
  Let $\alpha \in T$ and $\beta \in \PBaire$ and suppose that
  $\overline{\alpha}N = \overline{\beta}N$. 
  We have
    $
    \left( \forall b \in \FSeq \right) F(\overline{\alpha}N *
    \zero) = F(\overline{\alpha}N * b * \zero).
    $
  Since $F$ is pointwise continuous, there exists $m \geq N$ such
  that $F(\overline{\alpha}m * \zero) = F(\alpha)$ and
  $F(\overline{\beta}m * \zero) = F(\beta)$. Then,
  \[
    F(\alpha)
    = F(\overline{\alpha}m * \zero)
    = F(\overline{\alpha}N * \zero)
    = F(\overline{\beta}m * \zero)
    = F(\beta).
  \]
  Hence, $F$ is strongly continuous.

  \medskip

\noindent($\Leftarrow$)
Assume $\SC$. Let $P$ be a c-bar and let $T$ be a fan. Then, there exists
$\delta \colon \FSeq \to \Nat$ such that 
  $
  P(a) \iff \left( \forall b \in \FSeq \right) \delta(a) =
  \delta(a * b)
  $
for all $a \in \FSeq$.
Define a function $F \colon \PBaire \to \Nat$ by
\begin{equation}\label{eq:FuncFromcBar}
  F(\alpha) \defeql \max D_{\alpha}
\end{equation}
where
   $ D_{\alpha} \defeql \left\{ n \in \Nat \mid
   \delta(\overline{\alpha}n) \neq
   \delta(\overline{\alpha}(n+1))\right\} \cup \{1\}$.
It is straightforward to show that $F$ is pointwise continuous.
Then $F$ is strongly continuous by $\SC$.  Thus, there exists $N \in
\Nat$ such that 
\[
  \left( \forall \alpha \in T \right)
  \left( \forall \beta \in \PBaire \right)
  \overline{\alpha} N = \overline{\beta} N 
  \implies F(\alpha) = F(\beta).  
\]
Define $M \in \Nat$ by
\[
  M \defeql \max \left\{N, \max \left\{ F(a * \zero) \mid a \in T \amp
  |a| = N \right\}  \right\} + 1.
\]
Let $\alpha \in T$ and $b \in \FSeq$. Since
  $
  M > F(\overline{\alpha}M * \zero) = F(\overline{\alpha}M * b *
  \zero),
  $
we have $\delta(\overline{\alpha}M) = \delta(\overline{\alpha}M * b)$.
Hence $P(\overline{\alpha}M)$. Thus $P$ is a uniform bar of
$T$.
\end{proof}

If $P$ is a c-bar and $T$ is a fan, then $P$ is a 
$\Pi^{0}_{1}$-bar of $T$, i.e.\ there exists a
decidable subset $D \subseteq T \times \FSeq$ such that
\[
  \left( \forall a \in T \right)\Bigl[  P(a) \iff \left( \forall b \in \FSeq
  \right) D(a,b) \Bigr].
\]
Let $\PiFT(T)$ be the following statement about a fan $T$:
\begin{description}
 \item[$\PiFT(T)$] Every $\Pi^{0}_{1}$-bar of $T$ is uniform.
\end{description}
If we let $\PiFT$ stand for $\PiFT(\BSeq)$, then
$\PiFT(T)$ (for any fan $T$) follows from $\PiFT$. This can be proved
by a straightforward modification of the proof of a similar fact about
Fan Theorem; see Troelstra and {van Dalen} \cite[Chapter 4, Section 7.5]{ConstMathI}.
Thus, we have an upper bound of the strength of $\SC$.
\begin{corollary}
  $\PiFT \implies \SC$.
\end{corollary}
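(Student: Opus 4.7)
The plan is to chain two observations the paper has essentially set up already: every c-bar, restricted to a fan, is a $\Pi^{0}_{1}$-bar of that fan; and $\PiFT$ implies $\PiFT(T)$ for every fan $T$. Combining these yields $\scFT$, which by Proposition \ref{prop:scFTequivSC} is equivalent to $\SC$.

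First I would unfold what it means for a c-bar $P$ and a fan $T$ to give a $\Pi^{0}_{1}$-bar of $T$. Given $\delta \colon \FSeq \to \Nat$ witnessing that $P$ is a c-set, the subset
\[
  D \defeql \left\{ (a,b) \in T \times \FSeq \mid \delta(a) = \delta(a*b) \right\}
\]
is decidable because equality on $\Nat$ is decidable and $T$ is a decidable tree. By the definition of c-set,
\[
  \left( \forall a \in T \right)\Bigl[ P(a) \iff \left( \forall b \in \FSeq \right) D(a,b) \Bigr],
\]
so $P$ is indeed a $\Pi^{0}_{1}$-bar of $T$ (it is a bar because $P$ is a bar of $\FSeq$ and $T \subseteq \FSeq$).

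Next I would appeal to the fact, noted in the paper just before the statement, that $\PiFT(T)$ follows from $\PiFT$ for any fan $T$ (the argument being a routine adaptation of the standard reduction of $\FT$ on an arbitrary fan to $\FT$ on the binary fan, via a decidable retraction $\FSeq \to T$ paired with the coding of $T$'s branching into $\BSeq$). Applied to the c-bar $P$ and fan $T$ above, $\PiFT(T)$ gives some $N \in \Nat$ with $(\forall \alpha \in T)(\exists n \leq N) P(\overline{\alpha}n)$, and since c-bars are monotone we may take $n = N$. Hence $P$ is uniform for $T$, establishing $\scFT$, and by Proposition \ref{prop:scFTequivSC} we conclude $\SC$.

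There is no real obstacle here; the only mildly delicate point is the transfer of $\PiFT$ from the binary fan to an arbitrary fan $T$, which the paper has already flagged as a routine modification of the classical proof for $\FT$ and which I would cite rather than redo.
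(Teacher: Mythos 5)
Your proof is correct and follows the same route the paper outlines: restrict the c-bar to the fan to see it is a $\Pi^{0}_{1}$-bar, transfer $\PiFT$ from the binary fan to the arbitrary fan $T$, derive $\scFT$, and conclude via Proposition \ref{prop:scFTequivSC}. You have merely filled in details the paper leaves implicit (the explicit decidable $D$, and the use of monotonicity to pass from $\exists n \le N$ to $n = N$), both of which are accurate.
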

\begin{remark}
  It is easy to see that $\SC$ implies the uniform continuity
  principle, which says that every pointwise
  continuous function $F \colon \PCantor \to \Nat$ is uniformly
  continuous (cf.\ Troelstra and {van Dalen} \cite[Chapter 4, Lemma 1.4]{ConstMathI}).
\end{remark}

Next, we consider a similar statement for formal continuity:
\begin{description}
  \item[\FC] Every pointwise continuous function
    $F \colon \PBaire \to \Nat$ is formally continuous.
\end{description}
\begin{remark}
The statement $\FC$ implies $\SC$ by the result of Palmgren \cite[Theorem
2.3]{PalmgrenFormalContUnifContNCI}, where he showed that formal
continuity implies strong continuity.
\end{remark}

We show that $\FC$ is equivalent to the following variant of Bar
Induction introduced in \cite{KawaiUnifContBaire}:
\begin{description}
  \item[\cBI] For any c-bar $P \subseteq \FSeq$ and a subset $Q \subseteq
    \FSeq$, if $P \subseteq Q$ and $Q$ is inductive, then
    $Q(\nil)$.
\end{description}
Here, a subset $Q \subseteq \FSeq$ is \emph{inductive} if
  $
  \left( \forall a \in \FSeq \right) \left[ \left( \forall n \in \Nat
  \right) Q(a * \langle n \rangle) \implies  Q (a) \right].
  $

We can restate $\cBI$ in terms of ``canonical proof'' as follows.
\begin{proposition}\label{prop:cBICanon}
  The following are equivalent:
  \begin{enumerate}
    \item\label{prop:cBICanon1}
      $\cBI$
    \item\label{prop:cBICanon2}
      $\nil \cov_{\FBaire} P$ for every c-bar $P \subseteq \FSeq$.
  \end{enumerate}
\end{proposition}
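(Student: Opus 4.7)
The plan is to establish the two directions by unfolding definitions and using the inductive structure of $\cov_{\FBaire}$, with Lemma \ref{lem:ElimZeta} playing the key role in one direction.

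For the direction \ref{prop:cBICanon1}$\Rightarrow$\ref{prop:cBICanon2}, I would take a c-bar $P$ and define $Q \defeql \left\{ a \in \FSeq \mid a \cov_{\FBaire} P \right\}$. Then $P \subseteq Q$ holds by the $\eta$-rule, and $Q$ is inductive by the $\digamma$-rule. Hence $\cBI$ directly yields $Q(\nil)$, i.e.\ $\nil \cov_{\FBaire} P$.

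For \ref{prop:cBICanon2}$\Rightarrow$\ref{prop:cBICanon1}, let $P$ be a c-bar and $Q$ an inductive subset of $\FSeq$ with $P \subseteq Q$. By \ref{prop:cBICanon2} we have $\nil \cov_{\FBaire} P$. One might try to prove $Q(a)$ directly by induction on $a \cov_{\FBaire} P$, but the $\zeta$-clause is awkward: from an induction hypothesis $Q(a)$ we cannot generally conclude $Q(a * \seq{k})$ since $Q$ is only assumed inductive, not monotone. This is the main obstacle, and it is resolved by Lemma \ref{lem:ElimZeta}: since every c-bar is monotone (if $\delta(a) = \delta(a*b')$ for all $b'$, then in particular $\delta(a) = \delta(a*c)$ and $\delta(a) = \delta(a*c*b)$ for all $b$, so the property is preserved under extending $a$), we have $\ext{P} = P$, and therefore $\nil \cov_{\FBaire} P$ implies $\nil \bcov_{\FBaire} P$.

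Now I would proceed by induction on the generation of $\bcov_{\FBaire}$, which uses only $\eta$ and $\digamma$: proving $\left( \forall a \in \FSeq \right)\left[a \bcov_{\FBaire} P \imp Q(a) \right]$. The $\eta$-case is immediate from $P \subseteq Q$, and the $\digamma$-case follows from the inductivity of $Q$ applied to the induction hypothesis at each child $a * \seq{n}$. Instantiating at $a = \nil$ gives $Q(\nil)$, completing $\cBI$.
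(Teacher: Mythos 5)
Your proof is correct and follows essentially the same route as the paper: for the forward direction you instantiate $\cBI$ at $Q = \{a \mid a \cov_{\FBaire} P\}$, and for the converse you use the monotonicity of c-sets together with Lemma~\ref{lem:ElimZeta} to pass to $\bcov_{\FBaire}$, then induct using only the $\eta$ and $\digamma$ clauses. Your explicit verification that c-sets are monotone and your explanation of why the $\zeta$-clause would block a direct induction are both accurate, though the paper takes monotonicity of c-sets as already noted in the text following the definition.
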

\begin{proof}
  ($\ref{prop:cBICanon1} \Rightarrow \ref{prop:cBICanon2}$) Assume
  $\cBI$. Let $P \subseteq \FSeq$  be a c-bar. Define $Q \subseteq
  \FSeq$ by $Q(a) \defeqiv a \cov_{\FBaire} P$. Then $P \subseteq Q$
  by $\eta$-inference, and $Q$ is inductive by $\digamma$-inference.
  Thus, by $\cBI$ we have $Q(\nil)$, i.e.\ $\nil \cov_{\FBaire} P$.
  
\medskip 

\noindent ($\ref{prop:cBICanon2} \Rightarrow \ref{prop:cBICanon1}$) 
   Assume \ref{prop:cBICanon2}.
     Let $P \subseteq \FSeq$ be a c-bar and $Q \subseteq
    \FSeq$ be an inductive subset such that $P \subseteq Q$.
    By the assumption, we have $\nil \cov_{\FBaire} P$, and since $P$ is monotone, 
    we have $\nil \bcov_{\FBaire} P$ by Lemma \ref{lem:ElimZeta}.
    Since $Q$ is closed under $\eta$ and $\digamma$-inferences
    (with respect to $P$), we have $Q(\nil)$.
\end{proof}

\begin{lemma}\label{lem:UnifCover}
   In formal Baire  space, we have
     $
     a \cov_{\FBaire} a[k]
     $
   for all $k \in \Nat$ where
   \[
     a[k] \defeql \left\{ a * b \mid b \in \FSeq \amp \lh{b} = k \right\}.
   \]
\end{lemma}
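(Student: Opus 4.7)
The plan is to prove the lemma by induction on $k \in \Nat$, exploiting directly the two generating rules $\eta$ and $\digamma$ of the inductive definition of $\cov_{\FBaire}$ (we do not even need the $\zeta$-inference). I would first note, as a preparatory remark, the standard monotonicity property of $\cov_{\FBaire}$: if $U \subseteq V$ and $a \cov_{\FBaire} U$, then $a \cov_{\FBaire} V$. This is proved by a straightforward induction on the derivation of $a \cov_{\FBaire} U$: the $\eta$ case uses $U \subseteq V$, and $\zeta$ and $\digamma$ transfer verbatim. I will just invoke it without belabouring the argument.

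For the base case $k = 0$, observe that the unique finite sequence of length $0$ is $\nil$, so $a[0] = \{a\}$; hence $a \in a[0]$ and $a \cov_{\FBaire} a[0]$ by $\eta$-inference.

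For the inductive step, assume that $a' \cov_{\FBaire} a'[k]$ holds for every $a' \in \FSeq$, and fix an arbitrary $a \in \FSeq$. Applying the induction hypothesis at $a' = a * \seq{n}$ gives
\[
  a * \seq{n} \cov_{\FBaire} (a * \seq{n})[k]
\]
for each $n \in \Nat$. Because $(a * \seq{n})[k] = \{a * \seq{n} * b \mid |b| = k\} \subseteq a[k+1]$, monotonicity upgrades this to $a * \seq{n} \cov_{\FBaire} a[k+1]$ for every $n$. An application of $\digamma$-inference then yields $a \cov_{\FBaire} a[k+1]$, closing the induction.

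There is no real obstacle here; the only minor subtlety is being explicit about the monotonicity of $\cov_{\FBaire}$, which is why I would state and justify it once at the start rather than silently assume it. Everything else is a direct unfolding of the inductive clauses.
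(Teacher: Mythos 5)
Your proof is correct and matches the paper's own argument, which is simply stated as ``by induction on $k$''; you have merely spelled out the base case via $\eta$, the step via $\digamma$, and the standard monotonicity of $\cov_{\FBaire}$ needed to pass from $(a*\seq{n})[k]$ to $a[k+1]$. No further comment is needed.
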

\begin{proof}
  By induction on $k$.
\end{proof}

\begin{theorem}\label{thm:EquivcBIUCb}
  $\FC \iff \cBI$.
\end{theorem}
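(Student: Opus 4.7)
The plan is to prove both directions via the canonical-proof reformulation of $\cBI$ in Proposition~\ref{prop:cBICanon}, namely that $\cBI$ is equivalent to the statement ``$\nil \cov_{\FBaire} P$ for every c-bar $P$''.

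For $\cBI \Rightarrow \FC$, given a pointwise continuous $F \colon \PBaire \to \Nat$, I take the c-bar $P$ defined by \eqref{eq:cBarFromFun} and set
\[
  a \mathrel{r} n \defeqiv P(a) \amp F(a*\zero) = n.
\]
Then $r^{-}\Nat = P$, so the first formal topology map axiom reads $\nil \cov_{\FBaire} P$, which is exactly Proposition~\ref{prop:cBICanon}(\ref{prop:cBICanon2}) under $\cBI$. For the second axiom, I observe that $a \mathrel{r} n$, combined with pointwise continuity of $F$ and the definition of $P$, makes $F$ constantly equal to $n$ on the cone of points above $a$; hence when $n \neq m$ the intersection $\ext{(r^{-}\{n\})} \cap \ext{(r^{-}\{m\})}$ is empty and the axiom holds vacuously, while for $n = m$ the monotonicity of c-sets yields $\ext{(r^{-}\{n\})} \subseteq r^{-}\{n\}$ and $\eta$ suffices. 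Commutativity of the diagram follows from the same ``$F$ is constant on cones'' observation together with the fact that $\nil \cov_{\FBaire} P$ makes $P$ a pointwise bar via \eqref{eq:PtSplit}.

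For $\FC \Rightarrow \cBI$, I dualise the construction. Given a c-bar $P$ witnessed by $\delta$, the function $F$ of \eqref{eq:FuncFromcBar} is well-defined and pointwise continuous: any $n$ with $P(\overline{\alpha}n)$ simultaneously bounds $D_\alpha$ and serves as a continuity modulus at $\alpha$, since any $\beta$ agreeing with $\alpha$ up to length $n$ satisfies $P(\overline{\beta}n)$ and hence $D_\beta = D_\alpha$. Applying $\FC$ produces a formal topology map $r$ inducing $F$, giving $\nil \cov_{\FBaire} r^{-}\Nat$. The crux is to show $a \in r^{-}\Nat \implies a \cov_{\FBaire} P$: transitivity of $\cov_{\FBaire}$ then yields $\nil \cov_{\FBaire} P$ and Proposition~\ref{prop:cBICanon} closes the argument. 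For this, fix $a \mathrel{r} n$; by commutativity of the diagram, $F$ is constantly $n$ on the cone above $a$. Using Lemma~\ref{lem:UnifCover} I cover $a \cov_{\FBaire} a[k]$ with $k = \max(n+1-\lh{a}, 0)$, so that each $a*d \in a[k]$ has $\lh{a*d} \geq n+1$ while still having $F$ constantly $n$ on its cone; it remains to verify $P(a*d)$.

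I expect the main obstacle to be this final verification, which is the same telescoping argument as in the $(\Leftarrow)$-direction of Proposition~\ref{prop:scFTequivSC}: the hypothesis $\max D_{a*d*c*\zero} = n < \lh{a*d}$ for every $c$ forces $\delta(\overline{(a*d*c*\zero)}\,k) = \delta(\overline{(a*d*c*\zero)}(k+1))$ for all $k \geq \lh{a*d}$, and telescoping over $k \in [\lh{a*d}, \lh{a*d*c})$ together with constancy of $\delta$ on the eventual all-zero tail yields $\delta(a*d) = \delta(a*d*c)$ for every $c$, i.e., $P(a*d)$. A single application of $\eta$ then composes with the cover from Lemma~\ref{lem:UnifCover} to give $a \cov_{\FBaire} P$, completing the proof.
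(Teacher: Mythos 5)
Your proof is correct and follows essentially the same route as the paper: both directions go through the canonical-proof reformulation of $\cBI$ (Proposition~\ref{prop:cBICanon}), use the c-bar/function transformations \eqref{eq:cBarFromFun} and \eqref{eq:FuncFromcBar}, define the same relation $r$, and invoke Lemma~\ref{lem:UnifCover} for the key covering step. You merely fill in details the paper leaves as ``straightforward''; the only blemish is the passing remark about ``constancy of $\delta$ on the eventual all-zero tail'', which is neither needed nor true in general---the telescoping over $k\in[\lh{a*d},\lh{a*d*c})$ already gives $\delta(a*d)=\delta(a*d*c)$.
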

\begin{proof}
($\Rightarrow$) 
Assume $\FC$. 
We prove  item \ref{prop:cBICanon2} of Proposition \ref{prop:cBICanon}.
Let $P$ be a c-bar,
and let $\delta \colon \FSeq
\to \Nat$ be a function such that 
  $
  P(a) \iff \left( \forall b \in \FSeq \right) \delta(a) =
  \delta(a * b)
  $
for all $a \in \FSeq$.
Define a pointwise continuous function $F \colon \PBaire \to \Nat$ 
as in \eqref{eq:FuncFromcBar}.
Then $F$ is formally continuous by $\FC$.  Thus, there exists a
formal topology map $r \subseteq \FSeq \times \Nat$ such that
$i_{\FNat} \circ  F = \Pt{r} \circ i_{\FBaire}$.
Let $ a \in r^{-}\Nat$, and let $n \in \Nat$ such that $a \mathrel{r} n$. 
Choose $k \in \Nat$ such that $|a| + k > n$.
Then, for each $b \in a[k]$, we have $F(b*\zero) = F(b*b'*\zero) = n$
for all $b' \in \FSeq$, which implies
$P(b)$.  Thus $a \cov_{\FBaire} P$ by Lemma
\ref{lem:UnifCover}, and hence $\nil \cov_{\FBaire} r^{-}\Nat
\cov_{\FBaire} P$.

\medskip 

\noindent($\Leftarrow$) Assume $\cBI$. Let $F \colon \PBaire \to \Nat$ be a
pointwise continuous function. Define a c-bar $P\subseteq \FSeq$ 
as in \eqref{eq:cBarFromFun}.  By $\cBI$, we have $\nil \cov_{\FBaire} P$.
Define a relation $r \subseteq \FSeq \times \Nat$ by
\[
  a \mathrel{r} n \defeqiv P(a) \amp F(a) = n.
\]
It is straightforward to show that $r$ is a formal topology map from  $\FBaire$ to
$\FNat$, and that $i_{\FNat} \circ  F = \Pt{r} \circ i_{\FBaire}$.
\end{proof}

By Proposition \ref{prop:scFTequivSC} and Theorem \ref{thm:EquivcBIUCb}, we conclude as follows.
\begin{theorem}
  If every strongly continuous function form Baire space to the
  natural numbers is formally continuous, then
  $\scFT$ implies $\cBI$.
\end{theorem}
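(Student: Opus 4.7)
The plan is to simply chain together the two equivalences already established in the section. The hypothesis to be assumed is the implication \emph{strong continuity $\Rightarrow$ formal continuity} for every function $F\colon\PBaire\to\Nat$, and we wish to deduce $\scFT\imp\cBI$.

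First, I would assume $\scFT$ and invoke Proposition \ref{prop:scFTequivSC} to transfer this into the statement $\SC$, i.e.\ every pointwise continuous function $F\colon\PBaire\to\Nat$ is strongly continuous. Next, I would apply the hypothesis of the theorem pointwise: if $F\colon\PBaire\to\Nat$ is pointwise continuous, then it is strongly continuous by the previous step, and hence formally continuous by the standing hypothesis. This yields the principle $\FC$.

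Finally, I would invoke Theorem \ref{thm:EquivcBIUCb} to conclude that $\FC$ implies $\cBI$, which gives the desired conclusion. Since all three ingredients (Proposition \ref{prop:scFTequivSC}, the hypothesis, and Theorem \ref{thm:EquivcBIUCb}) are already in hand, no new construction is required and there is no genuine obstacle; the statement is essentially a corollary obtained by composing the two equivalences $\scFT\iff\SC$ and $\FC\iff\cBI$ across the assumed bridge $\SC\imp\FC$.
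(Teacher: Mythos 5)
Your proposal is correct and matches the paper's own argument exactly: the theorem is stated as an immediate consequence of Proposition \ref{prop:scFTequivSC} ($\scFT \iff \SC$) and Theorem \ref{thm:EquivcBIUCb} ($\FC \iff \cBI$), with the hypothesis supplying the bridge $\SC \imp \FC$. Nothing further is needed.
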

Recall that Fan Theorem is a statement obtained from $\PiFT$ by omitting the
restriction on bars, and decidable Bar Induction is a statement similar to
$\cBI$ but formulated with respect to decidable bars.
Obviously, Fan Theorem implies $\scFT$  and $\cBI$ implies decidable
Bar Induction.

Fourman and Hyland \cite{FourmanHyland} constructed several sheaf models of constructive
analysis in which Fan Theorem holds but decidable Bar Induction
fails.\footnote{Note that these models are constructed in the
classical metatheory. In particular, Fan Theorem in the metatheory
plays a crucial role.}
In one of their models \cite[Theorem 3.8]{FourmanHyland},
there is a decidable, monotone, and inductive bar $P$ such that
$\neg P(\nil)$. Since $P$ is decidable and monotone, $P$ is a c-bar
with respect to its characteristic function $\chi_{P}\colon \FSeq \to
\Bin$.  Thus, we can define a pointwise continuous function $F \colon
\PBaire \to \Nat$ as in \eqref{eq:FuncFromcBar}. Since Fan Theorem
holds in this model, $F$ is strongly continuous.  If $F$ is formally
continuous, then we can derive $\nil \cov_{\FBaire} P$ as
in the proof of the direction ($\Rightarrow$) in Theorem
\ref{thm:EquivcBIUCb}. Since $P$ is monotone and inductive, we have
$P(\nil)$, which is a contradiction. Hence, $F$ is strongly continuous
but not formally continuous in this model.

\section{Brouwer-operations}\label{sec:BrouwerContinuity}
We show that formally continuous functions from Baire space to the natural
numbers are equivalent to functions induced by Brouwer-operations.
The latter notion is well known in intutionistic mathematics, and
plays an important role in the theory of choice sequences; see
Kreisel and Troelstra \cite[Section 3]{KreiselTroelstra} and
Troelstra and {van Dalen} \cite[Chapter 4 and Chapter 12]{ConstMathI,ConstMathII}.
\begin{definition}\label{def:NbF}
  A class $\nbf$ of \emph{Brouwer-operations} is inductively
  defined by the following two clauses:
  \begin{gather*}
    \frac{n \in \Nat}{\curry{a}{n+1} \in \nbf}, \qquad
    \frac{\gamma(\nil) = 0 \amp \left( \forall n \in \Nat \right)
      \gamma_{n} \in \nbf}{\gamma \in \nbf},
  \end{gather*}
  where $\gamma_{n} \defeql \curry{a}{\gamma(\seq{n}*a)}$ for each
  $n \in \Nat$.
  If $\gamma \in K$ is introduced by the second
  clause, we write $\sup_{n \in \Nat} \gamma_{n}$ for $\gamma$.
\end{definition}

\begin{remark}\label{rem:BOp}
  If the class $\BO$ of Brouwer ordinals form a set, then $K$ is a set. This can be
  seen as follows. First, we define a class $K^{*}$ by induction:
  \begin{gather*}
    \curry{a}{1} \in K^{*}, \qquad
    \frac{\gamma(\nil) = 0 \amp \left( \forall n \in \Nat \right)
      \gamma_{n} \in K^{*}}{\gamma \in K^{*}}.
  \end{gather*}
 Clearly, $K^{*}$ is isomorphic to $\BO$.
 Then, it is straightforward to show that $K$ is isomorphic to the set 
 \begin{equation*}
   \sum_{\gamma \in K^{*}}\Nat^{\ibar{\gamma}} \defeql \left\{ (\gamma,f) \mid
     \gamma \in K^{*} \amp f \colon \ibar{\gamma} \to \Nat \right\},
   \end{equation*}
  where
  \begin{equation}\label{eq:Bar}
    \ibar{\gamma} \defeql \left\{ a \in \FSeq \mid \gamma(a) > 0 \amp
    \left( \forall b \spfix a \right) \gamma(b) = 0 \right\}.
 \end{equation}
\end{remark}

Each Brouwer-operation $\gamma \in K$ is a neighbourhood function,
i.e.\ it has the following properties:
\begin{enumerate}
  \item 
  $
  \left( \forall \alpha \in \PBaire \right)
  \left( \exists n \in \Nat \right) \gamma(\overline{\alpha}n)
  > 0,
  $

  \item 
  $
  \left( \forall a,b \in \FSeq \right)\bigl[ \gamma(a) > 0 \rightarrow
  \gamma(a) = \gamma(a*b)  \bigr].
  $
\end{enumerate}
Hence, a Brouwer-operation $\gamma$ defines a continuous function
$F \colon \PBaire \to \Nat$ whose value at
$\alpha \in \PBaire$ is $\gamma(\overline{\alpha}n) \dotminus 1$, where
$\overline{\alpha}n$ is the shortest initial segment of $\alpha$
such that $\gamma(\overline{\alpha}n) > 0$; in other words $\overline{\alpha}n
\in \ibar{\gamma}$ where $\ibar{\gamma}$ is defined as in \eqref{eq:Bar}.
The function $F \colon \PBaire \to \Nat$ that arises in this way is called
realisable.
\begin{definition}\label{def:Rlz}
A function $F \colon \PBaire \to \Nat$ is \emph{realised} by a
Brouwer-operation $\gamma \in \nbf$ if
  \[
    \left( \forall \alpha \in \PBaire \right)\left( \exists n \in \Nat \right)
    \gamma(\iseg{\alpha}{n}) = F(\alpha) + 1.
  \]
  In this case, we write $F \rlz \gamma$.
  We say that a function $F \colon \PBaire \to \Nat$ is
  \emph{realisable} if it is realised by some Brouwer-operation.
\end{definition}
Note that each Brouwer-operation realises exactly one function,
but two different Brouwer-operations may realise the same
function.

\begin{proposition}\label{prop:Rlz}
  A function $F \colon \PBaire \to \Nat$ is realisable if and only if
  there exists a Brouwer-operation $\gamma \in \nbf$ such that
  \begin{equation}\label{eq:ConstOnBar}
    \left( \forall a \in \ibar{\gamma} \right)
    \left( \forall \alpha,\beta \in a \right) F(\alpha) = F(\beta).
  \end{equation}
\end{proposition}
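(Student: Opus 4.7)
The plan is to prove the two directions by different means: the forward direction follows quickly from the two neighbourhood-function properties recalled just before Definition \ref{def:Rlz}, while the reverse direction will proceed by induction on $\gamma \in \nbf$ with a suitably strengthened hypothesis.

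For ($\Rightarrow$), assume $F \rlz \gamma$. Given $a \in \ibar{\gamma}$ and $\alpha, \beta \in a$, I would pick witnesses $n, m$ with $\gamma(\iseg{\alpha}{n}) = F(\alpha) + 1$ and $\gamma(\iseg{\beta}{m}) = F(\beta) + 1$. Since these two values are positive but $\gamma$ vanishes on every strict prefix of $a$, we must have $n, m \geq \lh{a}$, so both $\iseg{\alpha}{n}$ and $\iseg{\beta}{m}$ extend $a$. The second neighbourhood-function property then forces both to equal $\gamma(a)$, yielding $F(\alpha) = F(\beta)$.

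For ($\Leftarrow$), I would prove by induction on $\gamma \in \nbf$ the strengthened claim: \emph{for every $F \colon \PBaire \to \Nat$ satisfying \eqref{eq:ConstOnBar} with $\gamma$, there exists $\gamma' \in \nbf$ realising $F$}. In the base case $\gamma = \curry{a}{n+1}$, we have $\ibar{\gamma} = \{\nil\}$, so the hypothesis forces $F$ to be globally constant with some value $m$, and $\gamma' \defeql \curry{a}{m+1}$ realises $F$. In the inductive case $\gamma = \sup_n \gamma_n$, set $F_n \defeql \curry{\alpha}{F(\seq{n} * \alpha)}$ and observe that $\seq{n} * a \in \ibar{\gamma}$ whenever $a \in \ibar{\gamma_n}$ (using $\gamma(\nil) = 0$ to dismiss $\nil$ as a strict prefix); this shows each pair $(\gamma_n, F_n)$ again satisfies \eqref{eq:ConstOnBar}. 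By the induction hypothesis, pick $\gamma'_n \in \nbf$ realising $F_n$ and assemble $\gamma'$ by $\gamma'(\nil) \defeql 0$ and $\gamma'(\seq{n} * b) \defeql \gamma'_n(b)$, which lies in $\nbf$ by the second clause of Definition \ref{def:NbF} and realises $F$.

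The main obstacle I expect is the bookkeeping in the inductive step, specifically recognising that $\ibar{\gamma_n}$ corresponds exactly to $\left\{ a \in \FSeq \mid \seq{n} * a \in \ibar{\gamma} \right\}$ so that the constancy condition descends from $(\gamma, F)$ to each $(\gamma_n, F_n)$. Everything else is a one-step unwinding of Definitions \ref{def:NbF} and \ref{def:Rlz}, together with the countable choice needed to pick the realisers $\gamma'_n$ uniformly in $n$, which is available in the ambient system $\CZF + \CC$ of this section.
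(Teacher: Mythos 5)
Your proof is correct and follows essentially the same route as the paper: the forward direction (which the paper dismisses as "Obvious") is unwound directly from the two neighbourhood-function properties, and the backward direction proceeds by the same induction on $\gamma \in \nbf$ with the same quantifier-strengthened hypothesis, using $\CC$ in the $\sup$ case. Your identification of $\ibar{\gamma_n}$ with $\{a \mid \seq{n}*a \in \ibar{\gamma}\}$ is precisely the bookkeeping the paper leaves implicit.
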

\begin{proof}
  ($\Rightarrow$) Obvious.

\noindent($\Leftarrow$) For a function $F \colon \PBaire \to \Nat$ and a
  Brouwer-operation $\gamma \in \nbf$,
  write $\Phi(F,\gamma)$ if the condition \eqref{eq:ConstOnBar}
  holds.  We show that
  \begin{equation}\label{eq:ConstRlz}
    \left( \forall F \in \Nat^{(\PBaire)} \right) \Phi(F,\gamma) \implies 
    \left( \exists \gamma'  \in \nbf \right) F \rlz \gamma'
  \end{equation}
   for all $\gamma \in \nbf$ by induction on $\nbf$.
   
   \medskip 

  \noindent$\gamma = \curry{a}{n + 1}$ for some $n \in \Nat$: Let $F \colon
      \PBaire \to \Nat$ such that $\Phi(F,\gamma)$.  Since
      $\ibar{\gamma} = \left\{ \nil \right\}$, the function $F$ is
      constant. Thus $F \rlz \curry{a}{F(\zero) + 1}$.

   \medskip 

  \noindent$\gamma = \sup_{n \in \Nat} \gamma_{n}$:
        Let $F \colon \PBaire \to \Nat$ such that $\Phi(F,\gamma)$.
        Then, for each $n \in \Nat$, we have $\Phi(F_n, \gamma_{n})$,
        where $F_n \colon \PBaire \to \Nat$ is defined by 
          $
          F_n(\alpha) \defeql F(\seq{n}*\alpha).
          $
        By induction hypothesis and $\CC$, there exists a sequence $\left 
        ( \gamma'_{n} \right)_{n \in \Nat}$ of Brouwer-operations such
        that $F_{n} \rlz \gamma'_{n}$ for each $n \in \Nat$. Then $F
        \rlz \sup_{n \in \Nat}\gamma'_{n}$.
\end{proof}

\begin{definition}\label{def:Cov}
  Let $\Cov$ be a collection of pairs $(a,U) \in
  \FSeq \times \Pow{\FSeq}$ inductively  defined by the following clauses:
  \begin{gather*}
    \frac{a \in \FSeq}{ \{a\}  \in \Cov(a)}, \qquad
    \frac{\left( \forall n \in \Nat \right) U_{n} \in
    \Cov(a*\seq{n})}{\bigcup_{n \in \Nat} U_n \in \Cov(a)},
  \end{gather*}
  where $U \in \Cov(a) \defeqiv (a,U) \in \Cov$.
\end{definition}

The following lemma says that $\Cov$ is a \emph{set
presentation} of $\FBaire$. The result is not new; see e.g.\ van~den Berg and Moerdijk \cite[Proposition
B.4]{vandenBerg20121367}. However, our proof seems to be more direct and
worth noting.
\begin{lemma}\label{lem:CovPresentFBaire}
  For any $a \in \FSeq$ and $U \subseteq \FSeq$, we have
  \[ 
    a \cov_{\FBaire} U \iff \left(\exists V \in \Cov(a) \right) V
    \subseteq \ext{U}.
  \]
\end{lemma}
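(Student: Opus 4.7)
The plan is to prove both directions by induction, one on the inductive definition of $\Cov$ and the other on the inductive definition of $\cov_{\FBaire}$ (after a preliminary simplification via Lemma \ref{lem:ElimZeta}).

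For the direction ($\Leftarrow$), I would proceed by induction on the derivation of $V \in \Cov(a)$. If $V = \{a\}$ is introduced by the first clause, then $V \subseteq \ext{U}$ gives $a \in \ext{U}$, so there exists $b \in U$ with $b \prefix a$. By $\eta$-inference, $b \cov_{\FBaire} U$, and $|a| - |b|$ applications of $\zeta$-inference yield $a \cov_{\FBaire} U$. If $V = \bigcup_{n \in \Nat} U_n$ is introduced by the second clause with $U_n \in \Cov(a * \seq{n})$, then each $U_n \subseteq V \subseteq \ext{U}$, so the inductive hypothesis gives $a * \seq{n} \cov_{\FBaire} U$ for every $n \in \Nat$, and $\digamma$-inference yields $a \cov_{\FBaire} U$.

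For the direction ($\Rightarrow$), I would first apply Lemma \ref{lem:ElimZeta} to reduce $a \cov_{\FBaire} U$ to $a \bcov_{\FBaire} \ext{U}$, so that the $\zeta$-rule is eliminated from the derivation. I would then induct on $\bcov_{\FBaire}$ with respect to the monotone set $W \defeql \ext{U}$. The $\eta$-case is immediate: if $a \in W$, take $V = \{a\} \in \Cov(a)$. For the $\digamma$-case, the inductive hypothesis gives, for each $n \in \Nat$, some $V_n \in \Cov(a * \seq{n})$ with $V_n \subseteq W$; applying $\CC$, one extracts a sequence $(V_n)_{n \in \Nat}$, and then $V \defeql \bigcup_{n \in \Nat} V_n \in \Cov(a)$ with $V \subseteq W = \ext{U}$.

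The main obstacle is managing the $\zeta$-rule cleanly. A direct induction on $\cov_{\FBaire}$ in the forward direction forces one to show that a cover in $\Cov(a)$ restricts to a cover in $\Cov(a * \seq{k})$, which would require an auxiliary induction on $\Cov$ establishing a kind of locality/monotonicity lemma. Reducing to $\bcov_{\FBaire}$ via Lemma \ref{lem:ElimZeta} sidesteps this entirely. The only other point worth flagging is that the $\digamma$-step of the reverse direction genuinely uses countable choice to obtain the sequence $(V_n)$; this is why the statement is placed in Section \ref{sec:BrouwerContinuity}, where $\CC$ is assumed.
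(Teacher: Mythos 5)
Your proof is correct and takes essentially the same approach as the paper: both directions proceed by structural induction (on $\Cov$ and on $\bcov_{\FBaire}$ respectively), with Lemma~\ref{lem:ElimZeta} invoked to eliminate $\zeta$-inferences in the forward direction, and $\CC$ used exactly where you flag it, in the $\digamma$-case. The only cosmetic difference is that the paper reduces both directions to an equivalence with $\bcov_{\FBaire}$ and $\ext{U}$ up front, whereas you reduce only the forward direction and handle the $\eta$-case of the $\Leftarrow$ induction with an explicit appeal to iterated $\zeta$-inferences; the two are interchangeable.
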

\begin{proof}
By Lemma \ref{lem:ElimZeta}, it suffices to show that 
\[
  a \bcov_{\FBaire} \ext{U} \iff
  \left(\exists V \in \Cov(a) \right) V \subseteq \ext{U}
\]
for all $a \in \FSeq$ and $U \subseteq \FSeq$. The proof is by
straightforward induction on $\bcov_{\FBaire}$ and $\Cov$
respectively. Note that the proof of the direction ($\Rightarrow$) requires
$\CC$ in the case where $a \bcov_{\FBaire} \ext{U}$ is derived by
$\digamma$-inference.
\end{proof}

\begin{proposition}\label{prop:FRep}
  A function $F \colon \PBaire \to \Nat$ is formally continuous if and only if
  there exists $U \in \Cov(\nil)$ such that
  \begin{equation}\label{eq:ConstOnCov}
    \left( \forall a \in U \right)
    \left( \forall \alpha, \beta\in a \right) F(\alpha) = F(\beta).
  \end{equation}
\end{proposition}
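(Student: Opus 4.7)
\medskip

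\noindent\textbf{Proof plan.}
The plan is to use Lemma~\ref{lem:CovPresentFBaire} as a bridge, translating between the cover relation $\cov_{\FBaire}$ that appears in the definition of a formal topology map and the explicit inductive presentation of covers given by $\Cov$.

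For the direction $(\Rightarrow)$, suppose $F$ is formally continuous via some formal topology map $r$. From the first axiom of a formal topology map I have $\nil \cov_{\FBaire} r^{-}\Nat$, and Lemma~\ref{lem:CovPresentFBaire} then produces some $U \in \Cov(\nil)$ with $U \subseteq \ext{(r^{-}\Nat)}$. The key step is to check that $F$ is constant on each $a \in U$: given such $a$ there exists $b \preccurlyeq a$ with $b \mathrel{r} n$ for some $n$, and for any $\alpha \in a$ we have $b \in i_{\FBaire}(\alpha)$, so
\[
  n \in \Pt{r}(i_{\FBaire}(\alpha)) = i_{\FNat}(F(\alpha)) = \{F(\alpha)\}
\]
by the commuting square, which forces $F(\alpha) = n$.

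For the direction $(\Leftarrow)$, assume $U \in \Cov(\nil)$ is given with $F$ taking a constant value $n_a$ on each $a \in U$, and define
\[
  a \mathrel{r} n \defeqiv a \in U \amp n_a = n,
\]
so that $r^{-}\Nat = U$. The first axiom of a formal topology map is then immediate from Lemma~\ref{lem:CovPresentFBaire}. For the second axiom I would split cases: if $n \neq m$ the intersection $\ext{(r^{-}\{n\})} \cap \ext{(r^{-}\{m\})}$ is empty, since any $a$ in it would yield $F(a * \zero) = n$ and $F(a * \zero) = m$; and if $n = m$, given $a \in \ext{(r^{-}\{n\})}$ I pick $b \preccurlyeq a$ with $b \in r^{-}\{n\}$, obtain $b \cov_{\FBaire} r^{-}\{n\}$ by $\eta$-inference, and lift this to $a$ by iterated $\zeta$-inference. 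For commutativity it suffices to verify that every $\alpha \in \PBaire$ meets $U$, which is a straightforward induction on the definition of $\Cov$; together with the constancy hypothesis this gives $\Pt{r}(i_{\FBaire}(\alpha)) = \{F(\alpha)\}$.

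The delicate point I anticipate is verifying the second axiom: the extension-closure $\ext{(-)}$ in its statement is precisely what forces an appeal to $\zeta$-inference rather than direct membership in $r^{-}\{n\}$, and the case split $n = m$ versus $n \neq m$ has to be handled carefully. Everything else is routine bookkeeping on top of Lemma~\ref{lem:CovPresentFBaire}.
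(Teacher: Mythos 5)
Your proof is correct and follows essentially the same route as the paper's: both directions hinge on Lemma~\ref{lem:CovPresentFBaire} to translate between $\cov_{\FBaire}$ and $\Cov$, and your relation $r$ (defined via the constant value $n_a = F(a * \zero)$) is the same as the paper's. You have merely spelled out the verifications that the paper dismisses as ``straightforward'' --- the constancy check in $(\Rightarrow)$ via the commuting square, and the decidable case split $n = m$ vs.\ $n \neq m$ together with $\eta$- and iterated $\zeta$-inferences for the second formal-topology-map axiom in $(\Leftarrow)$.
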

\begin{proof}
($\Rightarrow$) Suppose that $F$ is formally continuous. Then,
there exists a formal topology map $r \subseteq \FSeq \times \Nat$
such that $i_{\FNat} \circ F = \Pt{r} \circ i_{\FBaire}$.
Since $\nil \cov_{\FBaire} r^{-}\Nat$, there exists $U \in \Cov(\nil)$
such that $U \subseteq \ext{(r^{-}\Nat)}$ by Lemma
\ref{lem:CovPresentFBaire}.
Then, it is straightforward to show that $U$ satisfies the condition
\eqref{eq:ConstOnCov}.

\medskip

\noindent($\Leftarrow$) Suppose that there is $U \in \Cov(\nil)$ satisfying \eqref{eq:ConstOnCov}. Define a relation $r \subseteq \FSeq \times \Nat$ by
\[
  a \mathrel{r} n \defeqiv a \in U \amp F(a*\zero) = n.
\]
By Lemma \ref{lem:CovPresentFBaire}, we have $\nil
\cov_{\FBaire} r^{-}\Nat$. Moreover, the condition
\eqref{eq:ConstOnCov} ensures that $r$ satisfies the second condition of
formal topology map. Clearly we have $i_{\FNat} \circ F = \Pt{r} \circ i_{\FBaire}$.
\end{proof}

The following is a key lemma which relates formal continuity and
continuity with Brouwer-operations.
\begin{lemma}\label{lem:EqivSetPresentBOp}
For any subset $U \subseteq \FSeq$, we have
\begin{equation*}\label{eq:CovKEquiv}
  U \in \Cov(a) \iff \left( \exists \gamma \in K \right) 
  a * \ibar{\gamma} = U,
\end{equation*}
where $a * \ibar{\gamma} \defeql \left\{ a * b \mid b \in \ibar{\gamma} \right\}$. 
\end{lemma}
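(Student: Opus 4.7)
The plan is to prove each direction by induction on the appropriate inductive structure: the direction $(\Leftarrow)$ by induction on $\gamma \in \nbf$, and the direction $(\Rightarrow)$ by induction on $U \in \Cov(a)$. The engine for both inductions is the identity
\[
  \ibar{\sup_{n \in \Nat} \gamma_n} \;=\; \bigcup_{n \in \Nat} \seq{n} * \ibar{\gamma_n},
\]
which follows by unfolding Definition~\ref{def:NbF} and \eqref{eq:Bar}: the empty sequence $\nil$ is excluded from the left-hand side because $\gamma(\nil) = 0$, while a nonempty sequence $\seq{n}*b$ belongs to $\ibar{\sup_n \gamma_n}$ if and only if $\gamma_n(b) > 0$ and $\gamma_n(b') = 0$ for all $b' \spfix b$, i.e., if and only if $b \in \ibar{\gamma_n}$.

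For $(\Leftarrow)$, I would induct on $\gamma \in \nbf$ to show that $a * \ibar{\gamma} \in \Cov(a)$ for every $a \in \FSeq$. If $\gamma = \curry{b}{n+1}$, then $\ibar{\gamma} = \{\nil\}$, and so $a * \ibar{\gamma} = \{a\} \in \Cov(a)$ by the first clause of $\Cov$. If $\gamma = \sup_n \gamma_n$, applying the induction hypothesis with $a$ replaced by $a*\seq{n}$ and $\gamma$ by $\gamma_n$ gives $a * \seq{n} * \ibar{\gamma_n} \in \Cov(a*\seq{n})$ for each $n$; combining this with the identity above and the second clause of $\Cov$ yields $a * \ibar{\gamma} = \bigcup_n a*\seq{n}*\ibar{\gamma_n} \in \Cov(a)$.

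For $(\Rightarrow)$, I would induct on $U \in \Cov(a)$. The base case $U = \{a\}$ is witnessed by $\gamma = \curry{b}{1}$, since then $\ibar{\gamma} = \{\nil\}$ and $a * \ibar{\gamma} = \{a\} = U$. In the inductive step $U = \bigcup_{n \in \Nat} U_n$ with $U_n \in \Cov(a*\seq{n})$, $\CC$ applied to the induction hypothesis yields a sequence $(\gamma_n)_{n \in \Nat}$ in $\nbf$ with $a*\seq{n}*\ibar{\gamma_n} = U_n$; defining $\gamma \colon \FSeq \to \Nat$ by $\gamma(\nil) \defeql 0$ and $\gamma(\seq{n}*b) \defeql \gamma_n(b)$ makes $\gamma = \sup_n \gamma_n$ a Brouwer-operation, and the identity gives $a * \ibar{\gamma} = U$.

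The only real work lies in verifying the key identity for $\ibar{\sup_n \gamma_n}$; everything else is a transparent translation between the inductive clauses of $\nbf$ and those of $\Cov$. The main methodological subtlety is the appeal to $\CC$ in the inductive step of $(\Rightarrow)$, which is available in the ambient theory of Section~\ref{sec:BrouwerContinuity}.
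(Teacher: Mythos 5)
Your proposal is correct and follows essentially the same route as the paper: both directions by induction on the respective inductive definitions, with the identity $\ibar{\sup_{n}\gamma_n}=\bigcup_{n}\seq{n}*\ibar{\gamma_n}$ as the engine, and $\CC$ invoked only in the $(\Rightarrow)$ direction. You spell out the verification of the key identity and the construction of $\sup_n\gamma_n$ a bit more explicitly than the paper does, but the argument is the same.
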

\begin{proof}
($\Rightarrow$)
By induction on $\Cov$.

\medskip

  \noindent$\left\{ a \right\} \in \Cov(a)$: Take $\gamma = \curry{a}{1}$.
  Then, $\ibar{\gamma} = \left\{ \nil \right\}$, so $a *
  \ibar{\gamma} = \left\{ a \right\}$.

\medskip

   \noindent$\displaystyle \frac{\left( \forall n \in \Nat \right) U_{n}
   \in \Cov(a*\seq{n})}{\bigcup_{n \in \Nat}U_n \in \Cov(a)}$: By
   induction hypothesis and $\CC$, there exists a sequence $\left( \gamma_{n}
   \right)_{n \in \Nat}$ of Brouwer-operations  such that
   $a*\seq{n}*\ibar{\gamma_{n}} = U_n$ for each $n \in \Nat$. Put
   $\gamma = \sup_{n \in \Nat}\gamma_{n}$.   Since
   $\ibar{\gamma} = \bigcup_{n \in \Nat}\seq{n}*\ibar{\gamma_{n}}$,
   we have $a * \ibar{\gamma} = \bigcup_{n \in \Nat}U_n$.

\medskip

\noindent($\Leftarrow$) It suffices to show that
\[
  \left( \forall a \in \FSeq \right) a * \ibar{\gamma} \in \Cov(a)
\]
for all $\gamma \in \nbf$, which is proved by induction on $\nbf$.
The argument is similar to the proof of the direction
($\Rightarrow$).  Note that $\CC$ is not required.
\end{proof}

\begin{remark}\label{rem:FBDefined}
Lemma \ref{lem:EqivSetPresentBOp} also shows that $\Cov$ is a set if
the Brouwer ordinals form a set; thus the lemma justifies the definition of
formal Baire space.
\end{remark}
\begin{corollary}\label{lem:AltRlz}
For any subset $U \subseteq \FSeq$, we have
\[
  U \in \Cov(\nil) \iff \left( \exists \gamma \in \nbf \right)
  \ibar{\gamma} = U.
\]
\end{corollary}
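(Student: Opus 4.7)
The plan is to obtain this as a direct specialization of Lemma \ref{lem:EqivSetPresentBOp}. That lemma establishes the equivalence $U \in \Cov(a) \iff (\exists \gamma \in \nbf)\, a * \ibar{\gamma} = U$ for every $a \in \FSeq$, so the corollary is simply the instance $a = \nil$.

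The only computation needed is to observe that the prefixing operation $a * (-)$ becomes the identity when $a = \nil$: by the conventions fixed in the Notation paragraph (concatenation with the empty sequence acts trivially), we have $\nil * b = b$ for every $b \in \FSeq$, and therefore
\[
  \nil * \ibar{\gamma} = \left\{ \nil * b \mid b \in \ibar{\gamma} \right\} = \ibar{\gamma}.
\]
Substituting this identity into the equivalence from Lemma \ref{lem:EqivSetPresentBOp} yields exactly the statement of the corollary.

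There is no real obstacle here; the corollary is purely a notational specialization. I would write the proof in a single line: apply Lemma \ref{lem:EqivSetPresentBOp} with $a = \nil$ and use that $\nil * \ibar{\gamma} = \ibar{\gamma}$.
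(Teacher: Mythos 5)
Your proof is correct and is exactly the intended derivation: the paper presents this as an unproved corollary of Lemma \ref{lem:EqivSetPresentBOp}, and specializing that lemma to $a = \nil$, using $\nil * b = b$, is precisely what is meant. Nothing to add.
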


We are ready to state the main result of this section.
\begin{theorem}\label{thm:EqivRlzFRep}
  A function $F \colon \PBaire \to \Nat$ is realisable if and only if
  it is formally continuous.
\end{theorem}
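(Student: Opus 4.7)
The plan is to assemble the theorem from the three preparatory results already proved: Proposition \ref{prop:Rlz}, Corollary \ref{lem:AltRlz}, and Proposition \ref{prop:FRep}. Each of these gives a ``constancy on a bar/cover'' characterisation, and the main work has already been done in establishing them. What remains is a short chain of equivalences.

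First I would unfold realisability using Proposition \ref{prop:Rlz}: $F$ is realisable if and only if there exists $\gamma \in \nbf$ such that $F$ takes a single value on every basic open $a$ with $a \in \ibar{\gamma}$. Next, Corollary \ref{lem:AltRlz} tells us that the subsets of $\FSeq$ of the form $\ibar{\gamma}$ for $\gamma \in \nbf$ are exactly the members of $\Cov(\nil)$. So the existence of a $\gamma \in \nbf$ with $F$ constant on each $a \in \ibar{\gamma}$ is equivalent to the existence of a $U \in \Cov(\nil)$ with $F$ constant on each $a \in U$. Finally, Proposition \ref{prop:FRep} identifies this last condition with formal continuity of $F$. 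Composing these three equivalences yields the theorem.

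More precisely, given a realiser $\gamma$, I would set $U := \ibar{\gamma}$; by Corollary \ref{lem:AltRlz} this lies in $\Cov(\nil)$, and the constancy condition \eqref{eq:ConstOnBar} coincides verbatim with \eqref{eq:ConstOnCov}, so Proposition \ref{prop:FRep} yields formal continuity. In the other direction, given a witness $U \in \Cov(\nil)$ for formal continuity supplied by Proposition \ref{prop:FRep}, Corollary \ref{lem:AltRlz} furnishes $\gamma \in \nbf$ with $\ibar{\gamma} = U$, and the same matching of conditions lets Proposition \ref{prop:Rlz} produce an actual realiser (possibly a different $\gamma'$, since the proof of Proposition \ref{prop:Rlz} may reshape $\gamma$ via $\CC$).

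There is no real obstacle here beyond bookkeeping; the substantive content sits in the inductive translation between Brouwer-operations and elements of $\Cov$ (Lemma \ref{lem:EqivSetPresentBOp}) and in the proof of Proposition \ref{prop:Rlz}, both of which rely on $\CC$. The only care needed in writing the final proof is to keep track of the fact that the $\gamma$ witnessing \eqref{eq:ConstOnBar} need not itself realise $F$, so one must invoke Proposition \ref{prop:Rlz} rather than claim the equivalence outright.
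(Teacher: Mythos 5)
Your proposal matches the paper's proof exactly: the theorem is deduced immediately from Proposition \ref{prop:Rlz}, Proposition \ref{prop:FRep}, and Corollary \ref{lem:AltRlz}, and your careful note that the $\gamma$ witnessing \eqref{eq:ConstOnBar} may need to be reshaped into a different realiser $\gamma'$ is a correct reading of what Proposition \ref{prop:Rlz} actually provides. The only detail the paper adds is the observation that the ``only if'' direction avoids $\CC$, which you implicitly acknowledge by locating the $\CC$ uses in the forward chain.
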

\begin{proof}
Immediate from Proposition \ref{prop:Rlz}, Proposition \ref{prop:FRep}, and
Corollary \ref{lem:AltRlz}.
Note that the proof of ``only if'' part does not require
$\CC$.
\end{proof}

It is shown in \cite{KawaiUnifContBaire} that, under the assumption of
$\CC$, the statement $\cBI$ (see Section \ref{sec:StrongContinuity})
is equivalent to the following statement:
\begin{description}
  \item[\UCb] 
   Every pointwise continuous function $F \colon \PBaire \to \Nat$ is realisable.
\end{description}
This now becomes a corollary of Theorem \ref{thm:EquivcBIUCb} and Theorem \ref{thm:EqivRlzFRep}.

\section*{Acknowledgements}
The author thanks Giovanni Sambin and Milly Maietti for useful suggestions.
\bibliographystyle{abbrv}
%\bibliography{$HOME/refs}

 \newcommand{\noop}[1]{}

\end{document}